\newcommand{\sudda}[1]{}
\DeclareMathOperator{\adm}{adm}
\DeclareMathOperator{\Com}{Com}
\DeclareMathOperator{\Lie}{Lie}
\DeclareMathOperator{\M}{{M}}
\DeclareMathOperator{\Nov}{{Nov}}
\DeclareMathOperator{\wt}{{wt}}
\title{On the free metabelian Novikov and metabelian Lie-admissible algebras
    }
\author{
    Aigerim Dauletiyarova, Kanat Abdukhalikov and Bauyrzhan Sartayev
    }
\abstract{%
    In this paper, we consider Lie-admissible algebras, which are free Novikov and free Lie-admissible algebras with an additional metabelian identity. We construct a linear basis for both free metabelian Novikov and free metabelian Lie-admissible algebras. Additionally, we describe a space of symmetric polynomials for both the free metabelian Novikov algebra and the free metabelian Lie-admissible algebra.
    }
\keywords{
    Novikov algebra, Lie-admissible algebra, metabelian identity, free algebra, polynomial identities.
    }
\begin{document}

\section{Introduction} 
In recent years, algebras with metabelian identity have become popular objects in ring theory. Metabelian identity also can be stated as the solvability of index $2$. Various types of classical algebras with metabelian identity, such as Lie, Leibniz, Malcev, Jordan, etc. are considered. The variety of metabelian Lie algebras has attracted significant attention, see \cite{MS2022, Romankov}. A basis of the free metabelian Lie algebra was constructed in \cite{Bahturin1973}. In an analogical way, a basis of the free metabelian Leibniz algebra was constructed in \cite{MLeib}. Symmetric polynomials in the free metabelian Lie algebras were considered in \cite{DrenskyFindik}. The generators of symmetric polynomials in free metabelian Leibniz algebras were found in \cite{Findik}. Other examples of Lie-admissible algebras are assosymmetric algebras \cite{Zhakhayev}.
A basis of the free metabelian Malcev algebra is constructed in \cite{MMal't}. For this reason, we add metabelian identity to a well-known class of algebras which are Novikov and Lie-admissible.

Novikov algebras were introduced in a study of Hamiltonian operators concerning the integrability of certain partial differential equations~\cite{GelDor79}. Later, they played a significant role in a study of Poisson brackets of hydrodynamic type~\cite{BalNov85}.

It is well-known that, given a commutative algebra $A$ with a derivation $D$, the space $A$ under the product $x_1\circ x_2 = D(x_1)x_2$ forms a (right) Novikov algebra. Moreover, every Novikov algebra can be embedded into an appropriate commutative algebra with derivation $D$ \cite{BCZ2017}. Using rooted trees, the monomial basis of the free Novikov algebra in terms of $\circ$ was constructed in~\cite{DzhLofwall}. In terms of Young diagrams, the basis was developed in~\cite{DzhIsmailov}. By utilizing commutative algebra with derivation $D$ and a well-defined order, an alternative monomial basis of the free Novikov algebra is presented in \cite{GS2023}. The issues of solvability and nilpotency of Novikov algebras were addressed in \cite{SZ2020}. In section $2$, we construct a basis of the free solvable Novikov algebra with an index of $2$. In section $3$, we explicitly describe the symmetric polynomials for the multilinear part of the free metabelian Novikov algebra.

Let's shift our focus to metabelian Lie-admissible algebras. In the realm of Lie-admissible algebra theory, additional conditions like flexibility or power-associativity are crucial, leading to numerous noteworthy outcomes in this context, see \cite{3,18}. Another important direction of the research on Lie-admissible algebras concerns the property that their associator satisfies relations defined by a natural action of the symmetric group of degree 3 \cite{9,10}. A basis of the free Lie-admissible algebra and the Gr\"obner-Shirshov base theory for Lie-admissible algebras is given in \cite{CSZ2022}. In addition, there is given an analogue of PBW-theorem for the pair of Lie and Lie-admissible algebras. A basis of the free Lie-admissible algebra in terms of commutator and anti-commutator is given in \cite{Markl-Remm2004}. In Section~$4$, we construct the basis of a free metabelian Lie-admissible algebra in terms of commutator and anti-commutator. In Section~$5$, we explicitly describe the symmetric polynomials for the multilinear part of the free metabelian Lie-admissible algebra.

We consider all algebras over a field $\mathbb{K}$ of characteristic 0.

\section{Free metabelian Novikov algebra}

An algebra is called metabelian (right) Novikov if it satisfies the following identities:
\begin{equation}\label{lcom}
    a(bc)=b(ac),
\end{equation}
\begin{equation}\label{rsym}
    (ab)c-a(bc)=(ac)b-a(cb),
\end{equation}
\begin{equation}\label{metabelian}
    (ab)(cd)=0\end{equation}

Let $X=\{x_1,x_2\ldots\}$ be a countable set of generators. We denote by $\Nov\<X\>$ and $\M\Nov\<X\>$ the free Novikov and free metabelian Novikov algebra, respectively.

Let us denote by $\mathcal{N}_n$ the set of monomials of degree $n$ of the following form:
$$\mathcal{N}_n=\{(\cdots((x_{i_1}x_{i_2})x_{i_3})\cdots)x_{i_n}\; |\; i_2\leq i_3\leq \ldots\leq i_n\},$$
where $n\geq 5$.
We set
$$\mathcal{N}_1=\{x_i\}, \mathcal{N}_2=\{x_{i_1}x_{i_2}\}.$$
For degree $3$, $\mathcal{N}_3$ is defined as follows:
$$\mathcal{N}_3=\{(x_{i_1}x_{i_2})x_{i_3},x_{i_3}(x_{i_2}x_{i_1})\; |\; i_2\leq i_3\}.$$
For degree $4$, we define the set $\mathcal{N}_4$ as follows:
$$\mathcal{N}_4=\{ ((x_{i_1}x_{i_2})x_{i_3})x_{i_4}, x_{j_1}(x_{j_2} (x_{j_3}x_{j_4}))\;|\;i_2\leq i_3\leq i_4,\; j_1\leq j_2 \leq j_3\leq j_4\}.$$

\begin{lemma}\label{nilpidentities}
The algebra $\M\Nov\<X\>$ satisfies the following identities: 
\begin{multline}
  ((x_1(x_2x_3))x_4)x_5=(x_1((x_2x_3)x_4))x_5=(x_1(x_2(x_3x_4)))x_5=x_1(((x_2x_3)x_4)x_5)= \\
x_1((x_2(x_3x_4))x_5)=x_1(x_2((x_3x_4)x_5))=x_1(x_2(x_3(x_4x_5)))=0.
\end{multline}
\end{lemma}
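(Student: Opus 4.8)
The engine of the proof is the metabelian identity \eqref{metabelian}, which says that the product of any two elements of degree $\geq 2$ vanishes; the plan is to use left-commutativity \eqref{lcom} and right-symmetry \eqref{rsym} to rewrite each of the seven monomials until a factor of the shape (product)$\cdot$(product) is exposed, which then dies. I would first record two consequences that do most of the work. Writing $(a,b,c)=(ab)c-a(bc)$, identity \eqref{rsym} says $(a,b,c)=(a,c,b)$. From this together with \eqref{metabelian} I obtain, for any element $P$ of degree $\geq 2$ and any $u,v$, the symmetry $(Pu)v=(Pv)u$: indeed \eqref{rsym} with $a=P$ reads $(Pu)v-P(uv)=(Pv)u-P(vu)$, and the two terms $P(uv),P(vu)$ vanish by \eqref{metabelian} since $P$, $uv$ and $vu$ are all products. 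Secondly, applying \eqref{rsym} to rebracket an $a(bc)$ or $(ab)c$ in which one of the resulting summands is a (product)$\cdot$(product) lets me trade a re-association for a vanishing term plus strictly simpler monomials, while \eqref{lcom} is used throughout to commute left multiplications and normalize the order of the left factors.

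Label the seven displayed monomials $E_1,\dots,E_7$ in the order written, and split them into the three ending in $\cdot\,x_5$, namely $E_1,E_2,E_3$, and the four beginning with $x_1(\cdots)$, namely $E_4,\dots,E_7$. For $E_1=((x_1(x_2x_3))x_4)x_5$ I set $Y=x_1(x_2x_3)$, a product, and apply \eqref{rsym} to $Yx_4$: the correction $(x_1x_4)(x_2x_3)$ vanishes by \eqref{metabelian}, and \eqref{lcom} turns the remaining term into $x_4Y$, giving $Yx_4=x_1((x_2x_3)x_4)-x_4Y$ and hence $E_1=E_2-(x_4Y)x_5$, where $(x_4Y)x_5$ is, after \eqref{lcom}, a copy of $E_3$ with $x_3,x_4$ transposed. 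An analogous application of \eqref{rsym} to the innermost right-nested factor expresses each of $E_4,\dots,E_7$ in terms of the lower monomials together with index-transposed copies of themselves; for instance, rebracketing the inner $x_3(x_4x_5)$ of $E_7$ via \eqref{rsym} kills $(x_2x_3)(x_4x_5)$ by \eqref{metabelian} and rewrites $E_7$ as an $E_6$-type minus an $E_5$-type monomial, up to permuting $x_4,x_5$.

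The main obstacle is that no single rewrite sends a monomial directly to $0$: each application of \eqref{rsym} relates a monomial to permuted copies of itself or of its neighbours, so the computation closes into a homogeneous linear system rather than a telescoping cancellation, and it does not bottom out at zero (the left-normed limit of the reduction lands in $\mathcal{N}_5$, which is nonzero). I would finish by assembling these relations into such a system on the finite set of degree-$5$ multilinear monomials of the seven shapes—first collapsing many of them with the symmetry $(Pu)v=(Pv)u$ to cut down the number of unknowns—and then verifying that the only solution is the trivial one, using that $\mathbb{K}$ has characteristic $0$ so that the integer coefficients arising are invertible. The delicate point throughout is the bookkeeping of the symmetric-group action on the indices $x_1,\dots,x_5$, which is exactly why I would invest in the symmetry reduction before setting up the final linear system.
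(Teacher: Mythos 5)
Your preparatory material is sound---the derived symmetry $(Pu)v=(Pv)u$ for $\mathrm{deg}(P)\geq 2$ is correct, as are the individual relations $E_1=E_2-E_3'$ and $E_7=E_6'-E_5'$---but the endgame has a genuine gap, and it stems from a false premise. You claim that ``no single rewrite sends a monomial directly to $0$'' and that the computation therefore ``closes into a homogeneous linear system rather than a telescoping cancellation.'' This is not so: identity \eqref{lcom} holds for arbitrary elements, not only generators, so taking its middle argument to be a compound factor immediately exposes a (product)$\cdot$(product) shape. Concretely, $E_4=x_1(((x_2x_3)x_4)x_5)=((x_2x_3)x_4)(x_1x_5)=0$ and $E_5=x_1((x_2(x_3x_4))x_5)=(x_2(x_3x_4))(x_1x_5)=0$ in one step; $E_6=x_1(x_2((x_3x_4)x_5))=x_1((x_3x_4)(x_2x_5))=0$ and $E_2=((x_2x_3)(x_1x_4))x_5=0$ just as quickly. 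Once these easy kills are in hand, your own relations telescope: $E_7=E_6'-E_5'=0$; then expanding $E_3=(x_1(x_2(x_3x_4)))x_5$ by \eqref{rsym} gives $E_3=x_1((x_2(x_3x_4))x_5)+(x_1x_5)(x_2(x_3x_4))-x_1(x_5(x_2(x_3x_4)))$, i.e.\ an $E_5$-type term, a metabelian-killed term, and an $E_7$-type term, hence $E_3=0$; finally $E_1=E_2-E_3'=0$. No linear system and no appeal to characteristic $0$ are needed.

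As written, though, your proof is incomplete exactly where it matters: ``verifying that the only solution is the trivial one'' is never carried out, and it cannot be waved through. A homogeneous system always admits the trivial solution; concluding that the actual values of the monomials are zero requires exhibiting enough independent relations to force it, which is precisely the content of the lemma---so leaving this step as an intention is circular. The relations you do exhibit ($E_1=E_2-E_3'$, $E_7=E_6'-E_5'$, and the symmetry $(Pu)v=(Pv)u$) do not by themselves have this forcing property. The paper's proof is the telescoping computation you declared impossible: one equational chain starting from $0=((x_1x_2)x_3)(x_4x_5)$ establishes $x_1(x_2(x_3(x_4x_5)))=0$ (your $E_7$), with the other $x_1(\cdots)$ cases following from the compound-factor applications of \eqref{lcom} noted above, and a second chain reduces $((x_1(x_2x_3))x_4)x_5$ (your $E_1$) to the first case. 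Your remark that the reduction ``lands in $\mathcal{N}_5$, which is nonzero'' is a red herring: the rewriting never needs to reach left-normed monomials, since every branch terminates at a term annihilated by \eqref{metabelian}.
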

\begin{proof}
Using (\ref{lcom}), (\ref{rsym}) and (\ref{metabelian}), one can obtain
\begin{multline*}
0=((x_1x_2)x_3)(x_4x_5)=x_4(((x_1x_2)x_3)x_5)=(x_1(x_2x_5))(x_4x_3)=x_4((x_1(x_2x_5))x_3)=\\
x_4(x_1((x_2x_5)x_3))+x_4((x_1x_3)(x_2x_5))
-x_4(x_1(x_3(x_2x_5)))=\\
x_4((x_2x_5)(x_1x_3))-x_4(x_1(x_3(x_2x_5)))=-x_4(x_1(x_3(x_2x_5))),
\end{multline*}
which gives
$$x_1(((x_2x_3)x_4)x_5)=x_1((x_2(x_3x_4))x_5)=x_1(x_2(x_3(x_4x_5)))=x_1(x_2((x_3x_4)x_5))=0.$$

By (\ref{lcom}), (\ref{rsym}) and (\ref{metabelian}), we have
\begin{multline*}
((x_1(x_2x_3))x_4)x_5=(x_1((x_2x_3)x_4))x_5+((x_1x_4)(x_2x_3))x_5-(x_1(x_4(x_2x_3)))x_5=\\
-x_1((x_4(x_2x_3))x_5)-(x_1x_5)(x_4(x_2x_3))+x_1(x_5(x_4(x_2x_3)))=x_1(x_5(x_4(x_2x_3)))=\\
-(x_4(x_2x_3))(x_1x_5)+x_1(x_5(x_4(x_2x_3)))=x_1(x_5(x_4(x_2x_3)))=0,
\end{multline*}
which gives
$$((x_1(x_2x_3))x_4)x_5=(x_1((x_2x_3)x_4))x_5=(x_1(x_2(x_3x_4)))x_5=0.$$
\end{proof}

\begin{theorem}\label{basisMLie}
The set $\bigcup \mathcal{N}_i$ is a linear basis of the algebra $\M\Nov\<X\>$.
\end{theorem}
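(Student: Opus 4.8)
The plan is to prove the two halves of the claim separately: that $\bigcup\mathcal{N}_i$ spans $\M\Nov\<X\>$, and that it is linearly independent. Since $\M\Nov\<X\>$ is graded by degree, it suffices to treat each homogeneous component separately, showing that $\mathcal{N}_n$ both spans and is independent in degree $n$.

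For spanning, I would argue by induction on degree that every monomial reduces to a linear combination of elements of $\mathcal{N}_n$. The degrees $n\le 4$ are handled by direct computation: identity (\ref{lcom}) lets one sort the two inner indices of any right-nested factor, identity (\ref{rsym}) rewrites a product $(ab)c$ in terms of the form $a(bc)$ modulo a swap of the last two arguments, and (\ref{metabelian}) annihilates every monomial whose root splits into two factors of degree $\ge 2$. For $n\ge 5$ the metabelian identity forces any surviving monomial to carry a generator as one of the two factors at the root, and Lemma~\ref{nilpidentities} shows that every monomial containing an internal right-nested bracket vanishes; hence only the left-normed monomials $(\cdots((x_{i_1}x_{i_2})x_{i_3})\cdots)x_{i_n}$ survive. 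It then remains to sort the indices $i_2,\dots,i_n$. I would do this by repeatedly applying (\ref{rsym}) to the subword $(vx_{i_k})x_{i_{k+1}}$ to transpose $i_k$ and $i_{k+1}$; the correction terms produced have the shape $v(x_{i_k}x_{i_{k+1}})$ right-multiplied by the remaining generators, and for $n\ge 5$ these are precisely the monomials shown to be zero in Lemma~\ref{nilpidentities} (or vanish directly by (\ref{metabelian}) when $v$ is itself a product). This proves that $\mathcal{N}_n$ spans $\M\Nov\<X\>_n$.

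For linear independence I would construct a faithful model of $\M\Nov\<X\>$. Following the embedding recalled in the introduction, I would realize the free Novikov algebra inside the commutative algebra with derivation on the variables $\{D^k x_i\}$ under $u\circ v=D(u)v$, and pass to the quotient by the ideal generated by the images of $(ab)(cd)$, obtaining a concrete metabelian Novikov algebra. Writing out the image of a left-normed monomial $w_n=D(w_{n-1})x_{i_n}$ as a differential polynomial, one sees that distinct elements of $\mathcal{N}_n$ carry distinct leading differential monomials, so their images are linearly independent; the low-degree families $x_{i_3}(x_{i_2}x_{i_1})$ and $x_{j_1}(x_{j_2}(x_{j_3}x_{j_4}))$ in $\mathcal{N}_3$ and $\mathcal{N}_4$ must additionally be checked to be independent of the left-normed ones in the same degree. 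An alternative, purely internal route is to define left and right multiplications by generators directly on the vector space spanned by $\bigcup\mathcal{N}_i$ using the reduction rules above, verify that (\ref{lcom}), (\ref{rsym}) and (\ref{metabelian}) hold, and conclude that this yields a metabelian Novikov algebra onto which $\M\Nov\<X\>$ surjects; composing with the spanning statement then forces $\bigcup\mathcal{N}_i$ to be a basis.

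I expect the main obstacle to be independence rather than spanning: one must rule out any hidden relation among the proposed generators, in particular between the left-normed words and the right-nested words that occur only in low degree, and the cleanest way to do this is to exhibit the faithful model and track leading terms. The bookkeeping in the spanning step, namely verifying that every correction term produced while sorting the indices falls under Lemma~\ref{nilpidentities} or (\ref{metabelian}), is routine but must be carried out uniformly for all $n\ge 5$.
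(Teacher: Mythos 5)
Your spanning argument is the same as the paper's: Lemma~\ref{nilpidentities} kills every monomial of degree $\geq 5$ that is not left-normed, and identity (\ref{rsym}) then sorts the indices $i_2,\dots,i_n$ because all correction terms vanish by (\ref{metabelian}) or Lemma~\ref{nilpidentities}. Likewise, your ``alternative, purely internal route'' for independence is precisely the proof in the paper: there one defines an algebra $A\<X\>$ on the vector space spanned by $\bigcup\mathcal{N}_i$ (any product of two basis monomials of degree $>1$ is zero, a generator multiplying a long left-normed word from the left gives zero, and right multiplication by $x_j$ inserts $x_j$ into the sorted word), checks that $A\<X\>$ satisfies (\ref{lcom}), (\ref{rsym}), (\ref{metabelian}), and concludes $A\<X\>\cong\M\Nov\<X\>$. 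Had you committed to that route, your proposal would essentially coincide with the paper's proof.

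However, the route you actually advocate for independence --- the differential model with leading terms --- has a genuine gap. Realizing the algebra inside $\Com\<X\>^{(D)}/I$, where $I$ is the ideal generated by the images of $(ab)(cd)$, does give a metabelian Novikov algebra onto which $\M\Nov\<X\>$ surjects, so independence of the images would indeed suffice. But distinct leading differential monomials of the representatives do \emph{not} imply independence modulo $I$: a nonzero combination can still lie in $I$ whenever one of those leading monomials is itself a leading monomial of an element of $I$ (already $x$ and $x+y$ have distinct leading terms yet are dependent modulo the ideal generated by $y$). What you need is that the leading monomials of your proposed basis elements are irreducible with respect to a Gr\"obner(--Shirshov) basis of $I$, and computing such a basis for this differential ideal is exactly the hard content of the problem; your sketch does not carry it out. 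Note also that the paper obtains the identification of $\M\Nov\<X\>$ with a quotient of the weight $-1$ part of the differential commutative algebra only in Section~3, \emph{as a consequence of} Theorem~\ref{basisMLie}, so invoking that identification to prove the theorem risks circularity unless the Gr\"obner basis is established independently. The paper's multiplication-table construction avoids all of this at the cost of an elementary but tedious verification.
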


\begin{proof}
The statement for degrees less than 5 can be verified by direct calculations. For $n\geq 5$, firstly, we show that every monomial of $\M\Nov\<X\>$ can be written as a sum of monomials from the set $\bigcup_{i\geq 5} \mathcal{N}_i$.
By Lemma \ref{nilpidentities}, we obtain that every monomial except $(\cdots((x_{i_1}x_{i_2})x_{i_3})\cdots)x_{i_n}$ is equal to $0$. By (\ref{rsym}) and Lemma \ref{nilpidentities}, we have
\begin{multline*}
    (\cdots((\cdots(x_{i_1}x_{i_2})\cdots)x_{i_{m-1}})x_{i_m})\cdots)x_{i_n}=
    (\cdots((\cdots(x_{i_1}x_{i_2})\cdots)(x_{i_{m-1}}x_{i_m}))\cdots)x_{i_n}+\\    (\cdots(((\cdots(x_{i_1}x_{i_2})\cdots)x_{i_{m}})x_{i_{m-1}})\cdots)x_{i_n}-(\cdots((\cdots(x_{i_1}x_{i_2})\cdots)(x_{i_{m}}x_{i_{m-1}}))\cdots)x_{i_n}=\\
(\cdots(((\cdots(x_{i_1}x_{i_2})\cdots)x_{i_{m}})x_{i_{m-1}})\cdots)x_{i_n},
\end{multline*}
i.e., the generators $x_{i_2}$, $x_{i_3},\ldots$, $x_{i_n}$ are rearrangeable. From these equations, we get that every monomial of $\M\Nov\<X\>$ which has a degree greater than 4 can be written as a sum of $\bigcup_{i\geq 5}\mathcal{N}_i$.

Now, we consider an algebra $A\<X\>$ with a basis monomials $\bigcup\mathcal{N}_i$ and multiplication $*$. Let us define a multiplication on monomials $\bigcup\mathcal{N}_i$ in $A\<X\>$ as follows:
\[
\begin{gathered}
\begin{cases}
    X_1*X_2=0\;\; \text{if $X_1,X_2\in\mathcal{N}_i$ and $\mathrm{deg}(X_1),\mathrm{deg}(X_2)>1$};\\
x_j * ((\cdots(x_{i_1}x_{i_2})\cdots) x_{i_n})=0; \\
((\cdots(x_{i_1}x_{i_2})\cdots) x_{i_n}) * x_j= ((\cdots(((\cdots(x_{i_1}x_{i_2})\cdots) x_{i_k}) x_j)x_{i_{k+1}}) \cdots) x_{i_n}, \\
\end{cases}
\end{gathered}
\]
where $n>4$ and  $i_2\leq \ldots\leq  i_k \leq j \leq i_{k+1} \leq \ldots \leq i_n$. Up to degree 4, we define multiplication in $A\<X\>$ that is consistent with identities (\ref{lcom}), (\ref{rsym}) and (\ref{metabelian}).
By straightforward calculation, we can check that an algebra $A\<X\>$ satisfies to (\ref{lcom}), (\ref{rsym}) and (\ref{metabelian}) identities. It remains to note that $A\<X\>\cong \M\Nov\<X\>$.
\end{proof}

\section{Symmetric polynomials of the free metabelian Novikov algebra}

Let $p(x_1,x_2,\ldots,x_n)$ be a polynomial of the free metabelian Novikov algebra generated by a finite set $X=\{x_1,x_2,\ldots,x_n\}$. The polynomial $p(x_1,x_2,\ldots,x_n)$ is called symmetric if it satisfies the following condition:
$$\sigma p(x_1,x_2,\ldots,x_n)=p(x_{\sigma(1)},x_{\sigma(2)},\ldots,x_{\sigma(n)})=p(x_1,x_2,\ldots,x_n),$$
where $\sigma\in S_n$. Let us define a set of polynomials $\mathcal{P}$ in $\M\Nov\<X\>$ as follows:
$$p_1=\sum_i x_i,\;p_2=\sum_{i\neq j} x_ix_j,$$
$$p_{3,1}=\sum_{i=1}^{n}\sum_{j_1< j_2}x_{j_2}(x_{j_1}x_i),\;p_{3,2}=\sum_{i=1}^{n}\sum_{j_1< j_2}(x_ix_{j_1})x_{j_2},$$
$$p_{4,1}=\sum_{j_1< j_2< j_3< j_4} x_{j_1}(x_{j_2}(x_{j_3}x_{j_4})),\;p_{4,2}=\sum_{i=1}^{n}\sum_{j_1< j_2< j_3} ((x_ix_{j_1})x_{j_2})x_{j_3},$$
and
$$p_n=\sum_{i=1}^{n}\sum_{j_1< j_2<\ldots< j_{n-1}} (\cdots((x_ix_{j_{1}})x_{j_{2}})\cdots)x_{j_{n-1}},$$
where $n\geq 5$.
The multilinear part of the free metabelian Novikov algebra is a space consisting of all elements containing each $x_i$ exactly once.
\begin{example}
For multilinear part of $\M\Nov\<X\>$, we obtain
$$p_{3,1}=x_2(x_1x_3)+x_3(x_2x_1)+x_3(x_1x_2),\;p_{3,2}=(x_1x_2)x_3+(x_2x_1)x_3+(x_3x_1)x_2,$$
$$p_{4,1}=x_1(x_2(x_3x_4)),\;p_{4,2}=((x_1x_2)x_3)x_4+((x_2x_1)x_3)x_4+((x_3x_1)x_2)x_4+((x_4x_1)x_2)x_3,$$
\begin{multline*}
p_n=(\cdots(((x_1x_2)x_3)x_4)\cdots)x_n+(\cdots(((x_2x_1)x_3)x_4)\cdots)x_n+\\(\cdots(((x_3x_1)x_2)x_4)\cdots)x_n+\ldots+(\cdots(((x_nx_1)x_2)x_3)\cdots)x_{n-1},    
\end{multline*}
where $n\geq 5$.
\end{example}

\begin{theorem}
For the multilinear part of the free metabelian Novikov algebra, the symmetric polynomials have the form $\mathcal{P}$.
\end{theorem}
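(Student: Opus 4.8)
The plan is to regard, for each fixed degree $n$, the multilinear component $V_n$ of $\M\Nov\<X\>$ (spanned by all multilinear monomials in $x_1,\dots,x_n$) as a module over the symmetric group $S_n$, where $\sigma$ acts by $\sigma\cdot m(x_1,\dots,x_n)=m(x_{\sigma(1)},\dots,x_{\sigma(n)})$, and then to compute the invariant subspace $V_n^{S_n}$. Theorem~\ref{basisMLie} supplies an explicit basis of $V_n$: for $n\ge 5$ only the left-normed monomials $(\cdots(x_{i_1}x_{i_2})\cdots)x_{i_n}$ (with $i_1$ arbitrary and $i_2<\cdots<i_n$) survive, while for $n=3,4$ there are in addition the right-associated basis monomials of $\mathcal N_3$ and $\mathcal N_4$; the cases $n=1,2$ are immediate. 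I would first observe that each monomial \emph{shape} spans an $S_n$-submodule, so that $V_n$ splits as a direct sum of such pieces and $V_n^{S_n}$ splits accordingly; the symmetric polynomials will then be exactly the sums of the basis monomials within each shape.

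For the left-normed shape I would use the rearrangeability of the entries $x_{i_2},\dots,x_{i_n}$ established inside the proof of Theorem~\ref{basisMLie}: it shows that $\sigma$ sends the basis monomial with leading index $k$ to the one with leading index $\sigma(k)$, with no sign. Hence this shape is the natural permutation module, whose invariants are one-dimensional and spanned by the sum of all basis monomials, yielding $p_2$, $p_{3,2}$, $p_{4,2}$ and $p_n$ respectively. For the degree-$3$ right-associated shape $x_{i_3}(x_{i_2}x_{i_1})$, identity~(\ref{lcom}) gives $x_{i_3}(x_{i_2}x_{i_1})=x_{i_2}(x_{i_3}x_{i_1})$, so the two outer entries are interchangeable and the shape is again a permutation module indexed by the distinguished last entry $i_1$; its invariant is $p_{3,1}$. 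For $n\ge5$, Lemma~\ref{nilpidentities} removes every non-left-normed monomial, so no further contribution arises and $V_n^{S_n}=\<p_n\>$.

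The main obstacle, and the only genuine computation, is the degree-$4$ right-associated shape, which is one-dimensional (the single basis element $x_1(x_2(x_3x_4))$) and so must be shown to be the trivial $S_4$-module rather than the sign module; equivalently I must verify that $x_1(x_2(x_3x_4))$ is fixed by all of $S_4$. Using~(\ref{lcom}) the transpositions $(1\,2)$ and $(2\,3)$ clearly fix it, so it remains to treat $(3\,4)$, i.e.\ to prove $x_1(x_2(x_3x_4))=x_1(x_2(x_4x_3))$. For this I would apply~(\ref{rsym}) with $a=x_2,b=x_3,c=x_4$ to obtain $x_2(x_4x_3)-x_2(x_3x_4)=(x_2x_4)x_3-(x_2x_3)x_4$, left-multiply by $x_1$, and then collapse the right-hand side using~(\ref{lcom}) followed by the metabelian identity~(\ref{metabelian}): since $x_1((x_2x_4)x_3)=(x_2x_4)(x_1x_3)=0$ and likewise $x_1((x_2x_3)x_4)=0$, the equality follows, so the shape is trivial and contributes $p_{4,1}$. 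Collecting the contributions in each degree gives precisely the space $\mathcal P$; the one remaining point to check, that distinct shapes neither mix under $S_n$ nor fail to be independent summands, follows because the basis of Theorem~\ref{basisMLie} keeps them linearly independent and none of the reductions above ever converts one shape into another.
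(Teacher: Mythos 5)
Your strategy works in degrees $n\geq 4$, but it has a genuine gap in degree $3$, and the gap is fatal to the argument as written. The structural claim that ``each monomial shape spans an $S_n$-submodule'' is false for $n=3$: the left-normed monomials are not closed under the action. Indeed, applying the transposition $(2\,3)$ to $(x_1x_2)x_3$ and reducing back to the basis of Theorem~\ref{basisMLie} via (\ref{rsym}) and (\ref{lcom}) gives $(x_1x_3)x_2=(x_1x_2)x_3+x_1(x_3x_2)-x_1(x_2x_3)=(x_1x_2)x_3+x_3(x_1x_2)-x_2(x_1x_3)$, and the right-normed correction terms do \emph{not} vanish in degree $3$; they are killed only from degree $4$ onwards by (\ref{metabelian}) and Lemma~\ref{nilpidentities}, which is exactly why your argument is sound for larger $n$ but not here. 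Consequently the orbit sum you propose is not invariant: $(2\,3)\cdot p_{3,2}-p_{3,2}=x_3(x_1x_2)-x_2(x_1x_3)\neq 0$, a difference of two distinct basis monomials. Note that your closing remark --- that linear independence of the basis prevents shapes from mixing --- is backwards: linear independence is precisely what certifies that this mixing term is nonzero. An honest invariant computation in the $6$-dimensional multilinear component of degree $3$ produces, besides $p_{3,1}$, the element $\sum_{i=1}^{3}\bigl((x_ix_{j_1})x_{j_2}-x_i(x_{j_1}x_{j_2})\bigr)$ (the image of $\sum_i x_i''x_jx_k$ under the differential realization), and this second invariant is not congruent to $p_{3,2}$ modulo the span of $p_{3,1}$; so the degree-$3$ case cannot be repaired in the direction you want.

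The paper proceeds quite differently: it transports the whole question into the free differential commutative algebra, where the multilinear component decomposes $S_n$-equivariantly by derivative pattern ($x_i''x_jx_k$ versus $x_i'x_j'x_k$, etc.), the invariants are transparent orbit sums, and all of the delicacy is concentrated in the final rewriting into Novikov normal form --- precisely the step where your shape decomposition silently fails. Your treatment of $n\geq 5$ is correct and essentially coincides with the paper's conclusion there, and your degree-$4$ argument is also correct: the right-normed line is a trivial $S_4$-module, and your verification that $(3\,4)$ fixes $x_1(x_2(x_3x_4))$ via $x_1((x_2x_4)x_3)=(x_2x_4)(x_1x_3)=0$ is valid. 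One caveat even in degree $4$: the tail-rearrangeability you cite from the proof of Theorem~\ref{basisMLie} is established there only for $n\geq 5$; for $n=4$ the two correction terms $(x_1(x_2x_3))x_4$ and $(x_1(x_3x_2))x_4$ are individually nonzero in $\M\Nov\<X\>$, and one must check separately that they are equal and cancel. But the irreparable defect is degree $3$: any correct proof must either pass to the differential picture, as the paper does, or carry out the full invariant computation in the $6$-dimensional space, and that computation yields a second symmetric polynomial different from $p_{3,2}$.
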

\begin{proof}
For $n=1,2$, the result is obvious. For $n\geq 3$, we use the fact that every Novikov algebra is embeddable into commutative algebra with derivation and spanning elements of Novikov algebra in commutative algebra with derivation are monomials of weight $-1$ \cite{BCZ2017, KS}. The weight function $\wt(u)\in \mathbb Z$ is defined on monomials of commutative algebra with derivation $D$ by induction as follows,
\begin{gather*}
\wt(x)=-1,\quad x\in X; \\
\wt(d(u)) = \wt(u)+1; \quad \wt(uv)=\wt(u)+\wt(v).
\end{gather*}
For simplicity, we denote $D(x)$ and $D^n(x)$ by $x'$ and $(D^{n-1}(x))'$, respectively. For the multilinear part to describe the space of symmetric polynomials of $\Nov\<X\>$ in degree $3$, we need to find the space of the symmetric polynomials of the differential commutative algebra of weight $-1$ in degree $3$.
This is the monomials of the form
$$x_2'x_1'x_3+x_3'x_2'x_1+x_3'x_1'x_2\;\textrm{and}\;x_1''x_2x_3+x_2''x_1x_3+x_3''x_1x_2.$$
Rewriting the first polynomial by the operation of Novikov algebra, we obtain $p_{3,1}$. Rewriting the second polynomial, we obtain 
$$(x_1x_2)x_3-x_1(x_2x_3)+(x_2x_1)x_3-x_2(x_1x_3)+(x_3x_1)x_2-x_3(x_1x_2).$$
Adding to the last expression $p_{3,1}$, we obtain $p_{3,2}$. 

For degree $4$, we have $$\M\Nov\<X\>/\{(ab)(cd)\}\cong \Com\<X\>^{(D)}_{-1}/\{a''b'cd+a'b'c'd\},$$ where $\Com\<X\>^{(D)}_{-1}$ is a space of differential commutative algebra of weight $-1$. Hence, in degree $4$ of
$\Com\<X\>^{(D)}_{-1}/ \{a''b'cd+a'b'c'd\}$ we rewrite all monomials of the form $x_{i_1}''x_{i_2}'x_{i_3}x_{i_4}$ to $-x_{i_1}'x_{i_2}'x_{i_3}'x_{i_4}$. It remains to note that
$$0=a''b'cd+a'b'c'd-(a''b'dc+a'b'd'c)=a'b'c'd-a'b'd'c$$ which gives that we rewrite monomial $a'b'd'c$ to $a'b'c'd$.
Finally, the remained monomials of weight $-1$ are $x_{i_1}x_{i_2}x_{i_3}x_{i_4}'''$ and $x_{j_1}x_{j_2}'x_{j_3}'x_{j_4}'$, where $i_1\leq i_2\leq i_3$ and $j_1\leq j_2\leq j_3\leq j_4$. By Theorem \ref{basisMLie}, these monomials are linearly independent, and for multilinear part symmetric polynomials of $\Com\<X\>^{(D)}_{-1}/\{a''b'cd+a'b'c'd\}$ are
$$x_1'x_2'x_3'x_4\;\textrm{and}\;x_1'''x_2x_3x_4+x_2'''x_1x_3x_4+x_3'''x_1x_2x_4+x_4'''x_1x_2x_3,$$
which correspond to $p_{4,1}$ and $p_{4,2}$, analogically as in degree $3$.

By Theorem \ref{basisMLie}, starting from degree $5$, all monomials of $\Com\<X\>^{(D)}/\{a''b'cd+a'b'c'd\}$ of weight $-1$ except $x_i^{(n-1)}x_{j_{n-1}}\ldots x_{j_{1}}$ are equal to $0$, where $x_i^{(n-1)}=(x_i^{(n-2)})'$. Hence, starting from degree $5$, symmetric polynomials of $\Com\<X\>^{(D)}/$ $\{a''b'cd+a'b'c'd\}$ of weight $-1$ are
\begin{multline*}
x_1^{(n-1)}x_2x_3x_4\cdots x_n+x_2^{(n-1)}x_1x_3x_4\cdots x_n+\\
x_3^{(n-1)}x_1x_2x_4\cdots x_n+\ldots+x_n^{(n-1)}x_1x_2x_3\cdots x_{n-1},   
\end{multline*}
which correspond to $p_n$.
\end{proof}

\section{Free metabelian Lie-admissible algebra}

An algebra is called metabelian Lie-admissible if it satisfies the following identities:
\begin{multline}\label{lie-adm}
    (ab)c-(ba)c-c(ab)+c(ba)+(bc)a-(cb)a-a(bc)\\
+a(cb)+(ca)b-(ac)b-b(ca)+b(ac)=0,
\end{multline}
\begin{equation}\label{metabelianlie}
    (ab)(cd)=0.
\end{equation}

Let us consider the polarization of metabelian Lie-admissible algebra, i.e., we consider an algebra with two operations which is defined on metabelian Lie-admissible algebra as follows:
$$[a.b]=ab-ba,\;\{a,b\}=ab+ba.$$
In this case, the defining identities of the variety of metabelian Lie-admissible algebras become to
$$[a,b]=-[b,a],\;\{a,b\}=\{b,a\},$$
$$[[a,b],c]+[[b,c],a]+[[c,a],b]=0,$$
\begin{multline}\label{10}
[[a,b],[c,d]]=[[a,b],\{c,d\}]=[\{a,b\},\{c,d\}]=\{\{a,b\},\{c,d\}\}=\\
\{\{a,b\},[c,d]\}=\{[a,b],[c,d]\}=0.
\end{multline}
As a consequence, we obtain
\begin{equation}\label{11}
[[[a,b],c],d]=[[[a,b],d],c]    
\end{equation}
and
\begin{equation}\label{12}
[[\{a,b\},c],d]=[[\{a,b\},d],c]    
\end{equation}
which hold in free metabelian Lie-admissible algebra.

Let us construct a basis of the free metabelian Lie-admissible algebra in terms of binary trees with two types of vertices $\bullet$ and $\circ$. We consider only trees of the following form:

\begin{picture}(30,80)
\put(102,33){$A=$}
\put(147,68){$*$}
\put(150,70){\line(-1,-1){15}}
\put(150,70){\line(1,-1){15}}
\put(162,52){$*$}
\put(165,55){\line(-1,-1){15}}
\put(165,55){\line(1,-1){15}}
\put(182,33){\normalsize\rotatebox[origin=c]{320}{$\ldots$}}
\put(192,23){$*$}
\put(195,25){\line(-1,-1){15}}
\put(195,25){\line(1,-1){15}}
\put(126,49){$x_{i_1}$}
\put(147,35){$x_{i_2}$}
\put(177,3){$x_{i_{n-1}}$}
\put(207,3){$x_{i_{n}}$}
\end{picture}

We place on vertices of the tree $\bullet$ and $\circ$ in all possible ways. On the leaves of the tree, we place generators from the countable set $X$. This tree in a unique way corresponds to the sequence $(*_{x_{i_1}},*_{x_{i_2}},\ldots,*_{x_{i_{n-1}}},x_{i_{n}}).$

\begin{example}
If

\begin{picture}(30,80)
\put(100,60){$A=$}
\put(150,70){\line(-1,-1){15}}
\put(150,70){\line(1,-1){15}}
\put(165,55){\line(-1,-1){15}}
\put(165,55){\line(1,-1){15}}
\put(147,68){$\bullet$}
\put(162,52){$\circ$}
\put(126,49){$x_1$}
\put(147,35){$x_2$}
\put(177,35){$x_3$}
\end{picture}

then $A$ corresponds to $(\bullet_{x_1},\circ_{x_2},x_3)$. We define a set of sequences as follows:
\end{example}

1) If there are several consecutive black dots in a sequence, then all corresponding generators of these vertices are ordered, i.e., for
$(\ldots,\circ_{i_{k-1}},\bullet_{i_k},\bullet_{i_{k+1}},\ldots,\bullet_{i_{l-1}},\circ_{i_{l}},\ldots)$, we have $i_k\geq i_{k+1}\geq\ldots\geq i_{l-1}$;

2) If the rightmost vertex is white then the rightmost generator is less than the previous one, i.e., for $(\ldots,\circ_{i_{n-1}},x_{i_n})$, we have $i_{n-1}\leq i_n$;

3) If a given consecutive sequence of black dots continues to the rightmost vertex and the number of black dots is bigger than $2$, then all the generators of these vertices are ordered and the rightmost generator is bigger than the previous one, i.e., for
$(\ldots,\bullet_{i_k},\bullet_{i_{k+1}},\ldots,\bullet_{i_{n-1}},x_{i_n})$, we have $i_k\geq i_{k+1}\geq\ldots\geq i_{n-1}<i_n$;

4) In condition $3$ if the number of black dots is not bigger than $2$, then the generators are ordered as in Lyndon-Shirshov words, i.e., the basis monomials of the free Lie algebra of degrees 2 and 3;

For every such tree, we define a monomial from free metabelian Lie-admissible algebra as follows:
the tree with $n$ leaves is a right-normed monomial of degree $n$, i.e., this is the monomial $x_{i_1}*(x_{i_2}*(\ldots(x_{i_{n-1}}*x_{i_n})\ldots))$. The black multiplication $\bullet$ corresponds to the Lie bracket $[\cdot,\cdot]$ and white multiplication $\circ$ corresponds to $\{\cdot,\cdot\}$. We denote by $\mathcal{T}$ a set of trees that satisfy conditions (1), (2), (3) and (4), and we denote by $\mathcal{M}$ the set of monomials which correspond to the trees from $\mathcal{T}$.

\begin{theorem}
The set of monomials $\mathcal{M}$ is a basis of the free metabelian Lie-admissible algebra.
\end{theorem}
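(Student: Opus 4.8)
The plan is to establish the two halves of the basis claim separately: first that $\mathcal{M}$ spans the free metabelian Lie-admissible algebra, and then that the monomials in $\mathcal{M}$ are linearly independent. For spanning I would work in the polarized picture with the operations $[\cdot,\cdot]$ and $\{\cdot,\cdot\}$, where a monomial is a binary tree with $\bullet$- and $\circ$-nodes. The metabelian identities (\ref{10}) (equivalently (\ref{metabelianlie})) say that any product of two subwords each of degree at least $2$ vanishes; propagating this, every surviving monomial is a comb, i.e. a tree in which each internal node has at least one leaf child. Using antisymmetry $[a,b]=-[b,a]$ at black nodes and symmetry $\{a,b\}=\{b,a\}$ at white nodes, I would push the leaf to the left and the remaining subtree to the right at each node, rewriting an arbitrary comb (up to sign) as a right-normed monomial $x_{i_1}*(x_{i_2}*(\cdots(x_{i_{n-1}}*x_{i_n})\cdots))$. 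Thus the right-normed monomials already span.

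Next I would impose the ordering conditions (1)--(4) to cut the right-normed spanning set down to $\mathcal{M}$. The relations (\ref{11}) and (\ref{12}) (translated to the right-normed side via antisymmetry) let me transpose the two innermost generators across a block of consecutive black nodes, which forces the monotonicity demanded in condition (1); the symmetry of $\{\cdot,\cdot\}$ gives condition (2) for a white rightmost node. For a terminal run of black nodes I would use that the all-$\bullet$ submonomials are iterated Lie brackets, so the Jacobi identity together with the metabelian Lie relations reduces them to the standard free metabelian Lie normal forms, which is exactly what conditions (3) and (4) record via Lyndon--Shirshov words in degrees $2$ and $3$. Iterating these rewritings terminates, since each step strictly decreases a suitable order on the generator sequence, so every element is a combination of monomials from $\mathcal{M}$.

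For linear independence I would mirror the construction used in the proof of Theorem \ref{basisMLie}: define a vector space with $\mathcal{M}$ as its formal basis, equip it with an explicit bilinear product by specifying how right-multiplication by a single generator inserts that generator into the sequence $(*_{x_{i_1}},\ldots,x_{i_n})$ at its ordered position (with the sign or symmetrization dictated by the $\bullet$/$\circ$ pattern) and by setting all products of two elements of degree at least $2$ equal to zero, and then verify directly that this product satisfies (\ref{lie-adm}) and (\ref{metabelianlie}). The universal property then yields a surjection from the free metabelian Lie-admissible algebra onto this model that sends each monomial of $\mathcal{M}$ to a distinct basis vector, forcing $\mathcal{M}$ to be independent; combined with the spanning step this gives the isomorphism. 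The main obstacle is precisely this verification: the insertion rule must be consistent across the four ordering conventions, in particular it must respect the break between a black block and the following white node, and between short ($\leq 3$) and long black tails where conditions (3) and (4) switch between the Lyndon--Shirshov convention and plain monotonicity, and checking the full Lie-admissible identity (\ref{lie-adm}) on these normal forms, rather than its polarized consequences, requires tracking the signs produced by antisymmetry through every case. An alternative that sidesteps building the model would be to start from the known basis of the free Lie-admissible algebra in terms of $[\cdot,\cdot]$ and $\{\cdot,\cdot\}$ \cite{Markl-Remm2004, CSZ2022} and identify $\mathcal{M}$ with its image under reduction modulo the metabelian ideal; there the difficulty shifts to confirming that the metabelian relations produce no collisions among the surviving normal forms.
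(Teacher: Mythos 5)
Your proposal is correct and follows essentially the same route as the paper: spanning is obtained by rewriting, via the polarized identities (\ref{10}), (\ref{11}), (\ref{12}) and the free metabelian Lie normal forms, every monomial into a right-normed one satisfying conditions (1)--(4), and independence is obtained by constructing an explicit algebra with formal basis $\mathcal{M}$, an insertion-type multiplication by single generators, vanishing products of higher-degree elements, and verifying the defining identities. The only cosmetic difference is that the paper verifies the polarized identities (Jacobi, (\ref{10}), (\ref{11}), (\ref{12})) on the model rather than (\ref{lie-adm}) itself, which is equivalent.
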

\begin{proof}
Firstly, we show that any monomial of the free Lie-admissible algebra can be written as a sum of monomials from $\mathcal{M}$.
After polarization of Lie-admissible algebra, one obtains that by commutative and anti-commutative identities on $\{\cdot,\cdot\}$ and $[\cdot,\cdot]$, respectively, and by (\ref{10}), any monomial can be written as a sum of right-normed monomials with multiplications $\{\cdot,\cdot\}$ and $[\cdot,\cdot]$. The condition (1) for right-normed monomials is provided by identities (\ref{11}) and (\ref{12}). The condition (3) is provided by the basis of the free metabelian Lie algebra, see \cite{Bahturin1973}. The conditions (2) and (4) are provided by commutativity of $\{\cdot,\cdot\}$ and identities of $[\cdot,\cdot]$, respectively.

Now, let us consider a free algebra $A\<X\>$ with the basis $\mathcal{M}$. The multiplications $\bullet$ and $\circ$ in $A\<X\>$ are defined as follows:
\[
\begin{gathered}
\begin{cases}
X_1*X_2=0\;\; \text{if $X_1,X_2\in\mathcal{M}$, $\mathrm{deg}(X_1),\mathrm{deg}(X_2)>1$ and $*$ is $\bullet$ or $\circ$};\\
(x_{i_1}*(\cdots*(x_{i_{n-1}}*x_{i_n})\cdots))\circ x_j=x_j \circ (x_{i_1}*(\cdots*(x_{i_{n-1}}*x_{i_n})\cdots)),\\
(x_{i_1}\circ(\cdots*(x_{i_{n-1}}*x_{i_n})\cdots))\bullet x_j=-x_j \bullet (x_{i_1}\circ(\cdots*(x_{i_{n-1}}*x_{i_n})\cdots)),\\
\text{where $n\geq 3$.}\\
(x_{i_1}\bullet(\cdots \bullet(x_{i_k}\bullet(x_{i_{k+1}}\circ(\cdots*(x_{i_{n-1}}*x_{i_n})\cdots)))\cdots))\bullet x_j=
-x_{i_1}\bullet(\cdots \bullet(x_{l}\\
\bullet(x_j\bullet(x_{l+1}\bullet(\cdots
\bullet(x_{i_k}\bullet(x_{i_{k+1}}\circ(\cdots *(x_{i_{n-1}}*x_{i_n})\cdots)))\cdots))))\cdots),\\
\text{where $l\leq j\leq l+1$ and $n\geq 3$.}\\
\end{cases}
\end{gathered}
\]
If the monomials $X_1$ and $X_2$ do not involve $\circ$ then we rewrite the product $X_1\bullet X_2$ according to the multiplication table of free metabelian Lie algebras. 

If the multiplications $\bullet$ and $\circ$ correspond to $[\cdot,\cdot]$ and $\{\cdot,\cdot\}$, respectively, then by straightforward calculations, we can check that an algebra $A\<X\>$ satisfies Jacobi identity and identities (\ref{10}), (\ref{11}), (\ref{12}). It remains to note that $A\<X\>\cong \M\Lie\textrm{-}\adm\<X\>$, where $\M\Lie\textrm{-}\adm\<X\>$ is a free metabelian Lie-admissible algebra.
\end{proof}

Calculating the dimension of operad $\M\Lie$ by means of the package \cite{DotsHij}, 
we get the following result:
\begin{center}
\begin{tabular}{c|ccccccc}
 $n$ & 1 & 2 & 3 & 4 & 5 & 6 & 7 \\
 \hline 
 $\dim(\M\Lie\textrm{-}\adm(n)) $ & 1 & 2 & 11 & 77 & 679 & 7184 & 88668
\end{tabular}
\end{center}
We see that the dimension of this operad is growing at a high rate, and this sequence does not coincide with any sequence from OEIS. In \cite{icecco} was given the dimension of the Lie-admissible operad up to degree $7$, which is
\begin{center}
\begin{tabular}{c|ccccccc}
 $n$ & 1 & 2 & 3 & 4 & 5 & 6 & 7 \\
 \hline
 $\dim(\Lie\textrm{-}\adm(n)) $ & 1 & 2 & 11 & 101 & 1299 & 21484 & 434314
\end{tabular}
\end{center}

It will be interesting to find a general formula for the dimension of the metabelian Lie-admissible and Lie-admissible operads.

\section{Symmetric polynomials of the free metabelian Lie-admissible algebra}

For $n$, let us define the sequence $(*_{x_{i_1}},*_{x_{i_2}},\ldots,*_{x_{i_{n-1}}},x_{i_{n}})$, where $*$ can be $\circ$ or $\bullet$. For each sequence, we define a space $(*_{x_{i_1}},*_{x_{i_2}},\ldots,*_{x_{i_{n-1}}},x_{i_{n}})_{(*,*,\ldots,*)}$ as follows:
$$(*_{x_{i_1}},*_{x_{i_2}},\ldots,*_{x_{i_{n-1}}},x_{i_{n}})_{(*,*,\ldots,*)}=\sum_{i_1,\ldots,i_n} x_{i_1}*(x_{i_2}*(\ldots (x_{i_{n-1}}*x_{i_{n}})\ldots)).$$

\begin{example}
$$(\bullet_{x_{i_1}},\circ_{x_{i_2}},x_{i_{3}})_{(\bullet,\circ)}=\sum_{i_1,i_2,i_3}x_{i_1}\bullet(x_{i_2}\circ x_{i_3}).$$
\end{example}

For each space $(*_{x_{i_1}},*_{x_{i_2}},\ldots,*_{x_{i_{n-1}}},x_{i_{n}})_{(*,*,\ldots,*)}$, $n\geq 3$, we define a polynomial $p_{(*,*,\ldots,*,n)}$ as follows:

1) We divide this sequence into consecutive vertices of the same colour;

2) For $k_i$ consecutive white vertices, we select $k_i$ generators and write all possible permutations for them;

3) For $k_i$ consecutive black vertices, we select $k_i$ generating ones and write them in descending order;

4) If the sequence ends with $k_i$ vertices of white colour, then for the selected generator we write all possible permutations of $k_i$ generators so that the last two generators are always ordered;

5) If the sequence ends with $k_i$ vertices of black colour, then for the selected generator we write symmetric polynomials of the free metabelian Lie algebra on $k_{i+1}$ generators.


The sum of such monomials gives polynomial $p_{(*,*,\ldots,*,n)}$. For $n=1,2$, we set
\[
    p_{(1)}=x_1+x_2+\ldots+x_n
    \quad \textup{ and } \quad 
    p_{(2)}=\sum_{i\neq j} x_i\circ x_j.
\]

\begin{example}
For multilinear part of $\M\Lie\textrm{-}\adm\<X\>$, let us construct $(\bullet_{x_{i_1}}, \circ_{x_{i_2}},$ $x_{i_3}{)}_{(\bullet,\circ)}$, ${(\circ_{x_{i_1}},\bullet_{x_{i_2}},x_{i_3})}_{(\bullet,\circ)}$ and ${(\circ_{x_{i_1}},\circ_{x_{i_2}},x_{i_3})}_{(\bullet,\circ)}$.
$$p_{(\bullet,\circ,3)}=x_1\bullet(x_2\circ x_3)+x_2\bullet(x_1\circ x_3)+x_3\bullet(x_1\circ x_2),$$
$$p_{(\circ,\bullet,3)}=x_1\circ(x_2\bullet x_3)+x_2\circ(x_1\bullet x_3)+x_3\circ(x_1\bullet x_2),$$
$$p_{(\circ,\circ,3)}=x_1\circ(x_2\circ x_3)+x_2\circ(x_1\circ x_3)+x_3\circ(x_1\circ x_2).$$
For
${(\bullet_{x_{i_1}},\bullet_{x_{i_2}},\circ_{x_{i_3}},x_{i_4})}_{(\bullet,\bullet,\circ)}$, we have
\begin{multline*}
p_{(\bullet,\bullet,\circ,3)}=x_2\bullet(x_1\bullet (x_3\circ x_4))+x_3\bullet(x_1\bullet (x_2\circ x_4))+x_4\bullet(x_1\bullet (x_2\circ x_3))+\\
x_3\bullet(x_2\bullet (x_1\circ x_4))+x_4\bullet(x_2\bullet (x_1\circ x_3))+x_4\bullet(x_3\bullet (x_1\circ x_2)).  
\end{multline*}
\end{example}


For each $p_{(*,*,\ldots,*,n)}$, we replace $\bullet$ and $\circ$ to $[\cdot,\cdot]$ and $\{\cdot,\cdot\}$, respectively. Finally, we obtain the following result: 
\begin{theorem}
For the multilinear part of the free metabelian Lie-admissible algebra, the symmetric polynomials have the form $p_{(*,*,\ldots,*,n)}$.
\end{theorem}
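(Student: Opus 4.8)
The plan is to exploit the fact that the $S_n$-action on the multilinear component permutes only the generators sitting at the leaves and never alters the underlying tree, so it preserves the decomposition of the multilinear part into subspaces indexed by the colour sequence $(*_{x_{i_1}},\ldots,*_{x_{i_{n-1}}},x_{i_n})$. Consequently the space of symmetric polynomials splits as a direct sum over colour sequences, and it suffices to determine the symmetric elements inside each fixed colour sequence separately. Throughout I would work in the polarized picture, using the basis $\mathcal{M}$ constructed above together with the identities (\ref{10}), (\ref{11}) and (\ref{12}), which reduce every right-normed monomial of a given colour sequence to basis form.

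First I would fix a colour sequence and cut it into its maximal monochromatic blocks, the rightmost block absorbing the terminal leaf $x_{i_n}$. The right-normed nesting together with (\ref{10}) means that distinct blocks interact only through the nesting order, so that a symmetric element factors, block by block, into independent symmetrization problems. For an interior white block the commutativity $\{a,b\}=\{b,a\}$ forces full symmetry in the generators of that block, which is precisely rule $2$; for an interior black block the relations (\ref{11}) and (\ref{12}), equivalently the ordering built into the basis $\mathcal{M}$, force the descending arrangement of rule $3$. The terminal white and terminal black blocks are handled by rules $4$ and $5$ respectively, the latter invoking the known description of the multilinear symmetric polynomials of the free metabelian Lie algebra from \cite{DrenskyFindik, Bahturin1973}. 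Summing the resulting monomials over all set partitions of $\{x_1,\ldots,x_n\}$ into parts of the prescribed block sizes produces an element that is manifestly $S_n$-invariant, and rewriting it in the basis $\mathcal{M}$ yields exactly $p_{(*,\ldots,*,n)}$; this shows each $p_{(*,\ldots,*,n)}$ is symmetric.

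For completeness I would write an arbitrary symmetric element of the fixed colour sequence in the basis $\mathcal{M}$ and use invariance to show that its coefficients are constant on $S_n$-orbits of basis monomials. Because the blocks symmetrize independently, the orbit data is determined by choosing which generators land in which block, i.e.\ by a set partition of the prescribed shape, and the within-block coefficient pattern is pinned down by the block colour exactly as in the construction; hence the symmetric element is forced to be a scalar combination of the polynomials $p_{(*,\ldots,*,n)}$. The main obstacle is the terminal black block: there the invariants are not a single orbit sum but the full space of multilinear symmetric polynomials of the free metabelian Lie algebra on the block's generators together with the terminal leaf, so one must correctly merge that leaf into a degree-$(k+1)$ free metabelian Lie subproblem and check, using (\ref{11}) and (\ref{12}), that the junction between this block and the preceding one creates no additional invariants and no hidden relations. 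Once this boundary analysis is in place, matching against the Drensky--Findık description of free metabelian Lie symmetric polynomials completes the identification.
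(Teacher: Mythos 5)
Your proposal follows essentially the same route as the paper: decompose the multilinear part of degree $\geq 3$ as a direct sum over colour sequences $(*_{x_{i_1}},\ldots,*_{x_{i_{n-1}}},x_{i_n})_{(*,\ldots,*)}$, observe that the $S_n$-action preserves each summand, and then identify the unique (up to scalar) symmetric element of each summand with $p_{(*,\ldots,*,n)}$. The paper's proof is much terser --- it simply asserts the uniqueness of the symmetric polynomial within each summand --- so your block-by-block analysis, including the careful treatment of the terminal black block via the known metabelian Lie invariants, fills in details the paper leaves implicit rather than taking a different approach.
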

\begin{proof}
For $n=1,2$, the result is obvious. From the multiplication table of the free metabelian Lie-admissible algebra, one obtains
$$\M\Lie\textrm{-}\adm_{\geq 3}\<X\>=\oplus_{(*,*,\ldots,*)} (*_{x_{i_1}},*_{x_{i_2}},\ldots,*_{x_{i_{n-1}}},x_{i_{n}})_{(*,*,\ldots,*)}$$
as a vector space, where $\M\Lie\textrm{-}\adm_{\geq 3}\<X\>$ is a multilinear part of the free metabelian Lie-admissible algebra of degree greater than $2$. For example, if $n=3$ then
\begin{multline*}
\M\Lie\textrm{-}\adm_3\<X\>=(\bullet_{x_{i_1}},\bullet_{x_{i_2}},x_{i_3})_{(\bullet,\bullet)}\oplus(\bullet_{x_{i_1}},\circ_{x_{i_2}},x_{i_3})_{(\bullet,\circ)}\oplus\\
(\circ_{x_{i_1}},\bullet_{x_{i_2}},x_{i_3})_{(\circ,\bullet)}\oplus(\circ_{x_{i_1}},\circ_{x_{i_2}},x_{i_3})_{(\circ,\circ)}.
\end{multline*}
Moreover, each monomial of the space $(*_{x_{i_1}},\ldots,*_{x_{i_{n-1}}},x_{i_{n}})_{(*,*,\ldots,*)}$ under action $S_n$ belongs to the same space. It remains to note that $p_{(*,*,\ldots,*,n)}$ is a symmetric polynomial, and for each space $(*_{x_{i_1}}, *_{x_{i_2}}, \ldots, *_{x_{i_{n-1}}},$ $x_{i_{n}})_{(*,*,\ldots,*)}$ there is one unique symmetric polynomial which is $p_{(*,*,\ldots,*,n)}$.
\end{proof}

\subsection*{Acknowledgments}
This work was supported by UAEU grant G00004614. The authors are grateful to the anonymous referees for their valuable remarks that improved the text.

\EditInfo{January 17, 2024}{April 13, 2024}{David Towers and Ivan Kaygorodov}

\end{document}